\newcommand*\bigcdot{\mathpalette\bigcdot@{.5}}
\newcommand*\bigcdot@[2]{\mathbin{\vcenter{\hbox{\scalebox{#2}{$\m@th#1\bullet$}}}}}
\newtheorem{theorem}{Theorem}
\newtheorem{lemma}[theorem]{Lemma}
\theoremstyle{remark}
\newtheorem{remark}[theorem]{Remark}
\newtheorem{example}[theorem]{Example}
\def\XXint#1#2#3{{\setbox0=\hbox{$#1{#2#3}{\int}$ }
		\vcenter{\hbox{$#2#3$ }}\kern-.6\wd0}}
\newcommand{\mres}{\mathbin{\vrule height 1.6ex depth 0pt width
		0.13ex\vrule height 0.13ex depth 0pt width 1.3ex}}
\newcommand{\bv}{\operatorname{BV}}
\newcommand{\dif}{\operatorname{d}\!}
\newcommand{\R}{\mathbb{R}}
\newcommand{\M}{\mathbb{R}^{N \times n}}
\newcommand{\Rn}{\mathbb{R}^{n}}
\newcommand{\RN}{\mathbb{R}^{N}}
\newcommand{\wstar}{\stackrel{*}{\rightharpoonup}}
\newcommand{\sobo}{\operatorname{W}}
\renewcommand{\geq}{\geqslant}
\newcommand{\lebe}{\operatorname{L}}
\newcommand{\hold}{\operatorname{C}}
\newcommand{\curl}{\operatorname{curl}}
\renewcommand{\leq}{\leqslant}
\newcommand{\cd}{\mathrm{CD}}
\newcommand{\restrict}{\begin{picture}(10,8)\put(2,0){\line(0,1){7}}\put(1.8,0){\line(1,0){7}}\end{picture}}
		\noindent\textsc{Andrew Wiles Building, University of Oxford,\\ Radcliffe Observatory Quarter, Oxford, OX2 6GG, UK}\\
		\noindent\textsc{Centro di Ricerca Matematica Ennio de Giorgi, Scuola Normle Superiore,
			\\Piazza dei Cavalieri, 3, 56126 Pisa, IT, }   \\
		\noindent\textit{E-mail address}, B.~Rai\cb{t}\u{a}: \texttt{bogdanraita@gmail.com}
\begin{document}
	\title[Concentration in $\bv$]{Concentration effects of  $\bv$ gradients\\ have gradient structure}
	\author[J. Kristensen]{Jan Kristensen}
	\author[B. Rai\cb{t}\u{a}]{Bogdan Rai\cb{t}\u{a}}
	\subjclass[2020]{Primary: 49J45, Secondary: 28B05}
	\keywords{Generalized Young measures, Quasiconvexity, Functions of bounded variation}

	\begin{abstract}
		We prove that the concentration effects arising from weakly-* convergent sequences of gradients of
		maps of bounded variation have gradient structure. This is in stark contrast with the corresponding oscillation phenomena.
	\end{abstract}
	\maketitle
	Any probability measure valued map with finite $p$th moment can be identified with the oscillatory behaviour of a $p$-uniformly
	integrable sequence, in the sense that it is the (oscillation) Young measure generated by that sequence. Similar statements can be proved to
	describe the gap between weak and strong convergence in Lebesgue spaces in terms of families of probability measures.
	Much like convex functions can be used to characterize probability measures by Jensen's inequality, it was observed in \cite{KP,FMP}
	that quasiconvex functions characterize weak convergence of gradients of maps in reflexive Sobolev spaces. Although the analogous
	result for  gradients of $\bv$ maps holds true \cite{KrRi}, the phenomenon is quite different: while in the Sobolev case both
	oscillation and concentration measures are generated by gradients of Sobolev maps, it is known from \cite{Alberti0}
	(see \cite[Ex.~7.6]{Kristensen}) that any parametrized probability measure with finite first moment is the oscillation measure
	of a $\bv$ Young measure. In other words, the oscillation behaviour of an $\lebe^1$ bounded sequence of gradients can be as far
	from having gradient structure as a general $\lebe^1$ function can fail to be a gradient.
	This leads to the expectation that concentration effects in $\bv$ should also fail to have gradient structure in general.
	Surprisingly, this is not the case:
	\begin{theorem}\label{thm}
		Let $\Omega$ be a bounded Lipschitz domain in $\Rn$. Let $\nu=(\nu_x,\lambda,\nu_x^\infty)$
		be a Young measure generated by a sequence $Du_j\wstar Du$ in $\mathscr{M}(\Omega,\M )$, where $u$, $u_j\in\bv(\Omega,\RN ) $.
		Suppose that $\lambda(\partial\Omega)=0$ and let $( \phi_{j})$ be a smooth approximation of the identity. Then there exists a
		sequence $( v_j )$ in $\hold^\infty_c(\Omega,\RN )$ such that $\bigl( D(\phi_j \ast u + v_j) \bigr)$ generates
		$\bigl( \delta_{\overline{\nu}_x},\lambda,\nu_x^\infty \bigr)$.
	\end{theorem}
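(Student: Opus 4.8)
The plan is to let the mollified limit $\phi_j\ast u$ carry the oscillation — which, after mollification, collapses to its barycentre $\delta_{\overline\nu_x}$ — and to graft the genuine concentration of the generating sequence $(Du_j)$ into a shrinking neighbourhood of $\supp\lambda$. Write $Du=\nabla u\,\mathcal{L}^n+D^su$. Since $D(\phi_j\ast u)=\phi_j\ast Du$ and $\phi_j\ast(\nabla u\,\mathcal L^n)\to\nabla u$ strongly in $\lebe^1$, on any region away from $\supp\lambda$ the background generates the oscillation-free Young measure $\delta_{\nabla u}$; in the principal case $\lambda\perp\mathcal L^n$ the barycentre identity $\overline\nu=Du$ gives $\nabla u=\overline\nu_x$, so this is exactly the required oscillation part (a diffuse part of $\lambda$, should it occur, forces a strongly convergent correction into $v_j$ and is treated separately). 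The mollification of $D^su$, however, only produces concentration of mass $|D^su|$ with the single Dirac angle $\delta_{p}$, $p=\tfrac{\dif D^su}{\dif|D^su|}$, whereas we need the full angle $\nu_x^\infty$ of mass $\lambda$; correcting this is the whole point.

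To build the correction I would return to the sequence and decompose $Du_j$ in the manner of \cite{FMP} into a uniformly integrable part (carrying the oscillation $\nu_x$, irrelevant here) and a purely concentrating part $Db_j$, which vanishes in measure, stays $\lebe^1$-bounded, and generates the concentration $(\lambda,\nu_x^\infty)$ with net direction $\overline{\nu_x^\infty}\,\lambda=D^su$. For a fixed neighbourhood $S_\delta$ of $\supp\lambda$ one has $\int_{\Omega\setminus S_\delta}|Db_j|\to0$, so by choosing $k(j)\to\infty$ fast against $\delta_j\to0$ the entire concentration of $b_{k(j)}$ is captured inside $S_{\delta_j}$. I would then define $v_j$ by a cut-off that, inside $S_{\delta_j}$, swaps the mollified singular profile $\phi_j\ast D^su$ for the genuine profile $Db_{k(j)}$, retaining the smooth absolutely continuous background elsewhere; a final mollification together with $\lambda(\partial\Omega)=0$ places $v_j$ in $\hold^\infty_c(\Omega,\RN)$ and ensures $Dv_j\wstar0$, so that the weak-$*$ limit remains $Du$.

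The hard part will be the exact reconstruction of the angle $\nu_x^\infty$, and this is why I would replace rather than superimpose. A purely concentrating correction with $Dv_j\wstar0$ has an angle of zero barycentre, so superposition at separated scales would yield only the mixture $\tfrac1\lambda\!\left(|D^su|\,\delta_p+(\lambda-|D^su|)\sigma\right)$; this cannot equal $\nu_x^\infty$ when $\nu_x^\infty$ has no atom at $p$, since the required ``spread'' $\nu_x^\infty\lambda-|D^su|\delta_p$ is then a signed measure. Grafting avoids this because inside $S_{\delta_j}$ the mollified Dirac angle is discarded entirely and only the genuine profile $Db_{k(j)}$ — which already realises $\nu_x^\infty$ — survives. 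The technical obstacle then migrates to the cut-off: the transition terms $D\chi_{\delta_j}\otimes(\,\cdot\,)$ must be shown to create neither spurious concentration nor oscillation and to leave the weak-$*$ limit untouched, which requires the grafted profile and the background to agree to leading order across $\partial S_{\delta_j}$ and a careful coordination of the three parameters $k(j)$, $\delta_j$, and the mollification scale.

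Finally I would verify generation by testing $D(\phi_j\ast u+v_j)$ against integrands with recession pair $(f,f^\infty)$: the contribution of $\Omega\setminus S_{\delta_j}$ tends to the oscillation term $\int_\Omega f(x,\overline\nu_x)\,\dif x=\langle\delta_{\overline\nu_x},f\rangle$, that of $S_{\delta_j}$ tends to the concentration term $\int_{\overline\Omega}\langle\nu_x^\infty,f^\infty\rangle\,\dif\lambda$, and the transition terms vanish by the parameter coordination. Since Young-measure generation is metrizable, the required sequence is then obtained by a diagonal extraction in $j$. The hypothesis $\lambda(\partial\Omega)=0$ enters precisely to keep $S_{\delta_j}\Subset\Omega$, so that $v_j$ is compactly supported and no concentration mass escapes to $\partial\Omega$ under the cut-off.
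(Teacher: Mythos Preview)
Your proposal has a genuine gap precisely at the point you wave aside as routine: the diffuse concentration case $\lambda^a>0$. When $\lambda$ has an absolutely continuous part, the barycentre identity reads $\overline{\nu}_x+\lambda^a(x)\overline{\nu}_x^\infty=\nabla u(x)$ for $\mathscr L^n$-a.e.\ $x$, so $\phi_j\ast u$ has gradient converging strongly in $\lebe^1$ to $\overline{\nu}_x+\lambda^a\overline{\nu}_x^\infty$, not to $\overline{\nu}_x$. The correction you need is therefore not ``strongly convergent'': you must simultaneously remove $\lambda^a\overline{\nu}_x^\infty$ from the oscillation barycentre \emph{and} manufacture genuine concentration of mass $\lambda^a\mathscr L^n$ with the full angle $\nu_x^\infty$, and you must do this with a gradient. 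No decomposition lemma hands you this. Your citation of \cite{FMP} is misplaced: that decomposition is a $\sobo^{1,p}$, $p>1$, phenomenon relying on boundedness of the Helmholtz projection, and the paper explicitly remarks that such linear decompositions fail in $\bv$ --- indeed, their failure is exactly what makes the theorem nontrivial. Your grafting construction near $\supp\lambda^s$ is also only a sketch, but even granting it, the diffuse part has $\supp(\lambda^a\mathscr L^n)$ of full measure, so there is no ``shrinking neighbourhood'' into which to graft.

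The paper's route is entirely different and bypasses direct construction. It verifies that the target triple $(\delta_{\overline{\nu}_x},\lambda,\nu_x^\infty)$ satisfies the Jensen inequalities that characterise $\bv$-Young measures \cite{KrRi}, and then invokes off-the-shelf reconstruction lemmas from \cite{KrRa}. The singular part is handled by one such lemma directly. For the diffuse part the paper introduces a new object, the \emph{convex deficiency integrand} $\cd_F(z)=\sup\{F'(\xi)\bullet z:\xi\text{ a differentiability point of }F\}$, and shows --- by testing the known Jensen inequality for $\nu$ with the rescaled integrands $f_\varepsilon(z)=\varepsilon^{-1}(F(\xi+\varepsilon z)-F(\xi))$ and sending $\varepsilon\searrow 0$ --- that $\cd_F(\overline{\nu}_x^\infty)\leq\langle F^\infty,\nu_x^\infty\rangle$ at diffuse concentration points. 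Combined with the subadditivity $F(\xi+z)\leq F(\xi)+\cd_F(z)$, this yields $F(\overline{\nu}_x+\lambda^a\overline{\nu}_x^\infty)\leq F(\overline{\nu}_x)+\lambda^a\langle F^\infty,\nu_x^\infty\rangle$, which is precisely the Jensen inequality needed for $(\delta_{\overline{\nu}_x},\lambda^a\mathscr L^n,\nu_x^\infty)$. This convex-deficiency step is the heart of the argument, and your proposal contains nothing corresponding to it.
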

	We use the notation and terminology from \cite{AB,KrRi,KrRa_notes}. This result also holds true in a general framework
	of differential constraints and will appear in the final version of \cite{KrRa}.
	
	We emphasize that for the Young measure $\nu = \bigl( \nu_x , \lambda , \nu_{x}^{\infty} \bigr)$ in Theorem \ref{thm}, the Young measure
	$$
	\left( \nu_{x},|\bar{\nu}_{x}^{\infty}|\lambda , \delta_{\frac{\bar{\nu}_{x}^{\infty}}{|\bar{\nu}_{x}^{\infty}|}} \right)
	$$
	obtained by switching off the unnecessary concentration in $\nu$ need not be generated by a sequence of gradients. See Example \ref{ex}
	after the proof of Theorem \ref{thm}.
	
	Our result implies that the coupling of oscillation and concentration effects in $\bv$-Young measures is a nonlinear phenomenon.
	This is of course in contrast to the case of $\sobo^{1,p}$-Young measures, where one can use linear decomposition lemmas \cite{FMP,Kristensen}.
	
	The main new concept we introduce to prove our assertion is the \textbf{convex deficiency integrand}, defined for Lipschitz
	integrands $F\colon \M \to \R$ by
	$$
	\cd_F(z)\coloneqq\sup \biggl\{ F'(\xi)\bullet z \colon F\text{ is differentiable at }\xi\in\M \biggr\} ,
	$$
	where the bullet denotes the dot product in $\M$.
	Thus $\cd_F$ is the support function for the essential range of the derivative $F^\prime$. It is easy to check that we also have 
	\begin{equation}\label{equivdef}
		\cd_F(z)=\sup\left\{\dfrac{F(\xi+t z )-F(\xi)}{t}\colon \xi\in\R^{N\times n},\,t>0\right\}
	\end{equation}
	and hereby that $\cd_F$ is the smallest positively $1$-homogeneous and convex integrand such that
	\begin{equation}\label{subadd}
		F(\xi+z)\leq F(\xi)+\cd_F(z)
	\end{equation}
	holds for all $z,\,\xi\in\M$.
	
	The other concepts we use are well-known and include the \emph{recession integrand}, defined for continuous integrands $F\colon \M \to \R$ by 
	$$
	F^\infty(z) \equiv \lim_{t\rightarrow\infty}\frac{F(tz)}{t},
	$$
	whenever the limit exists in $\R$ uniformly in $\{|z|=1\}$, and of \emph{quasiconvexity} in the sense of Morrey \cite{Morrey}: A continuous
	integrand $F\colon \M \to \R$ is said to be quasiconvex if 
	$$
	F(z)\leq \int_{(0,1)^n} \! F(z+D\varphi(x))\dif x
	\quad\text{for }z\in\M ,\,\varphi\in\hold^\infty_c((0,1)^n,\RN ).
	$$
	We record the following auxiliary lemma that also explains our choice of terminology for convex deficiency integrand.
	\begin{lemma}\label{auxiliary}
		Let $F \colon \M \to \R$ be a quasiconvex integrand that admits a recession integrand $F^\infty$. Then $F^{\infty}(z) \leq \cd_F (z)$ for
		all $z \in \M$ with equaity when $z$ has rank one. Furthermore, $\cd_{F} = F^\infty$ when $F$ is convex.
	\end{lemma}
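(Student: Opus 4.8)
\noindent\textit{Proof proposal.}
The plan is to treat the three assertions in turn, concentrating the real work in a single limit identity that powers both equality statements. The inequality $F^\infty(z) \le \cd_F(z)$ for every $z$ is immediate from the difference-quotient description \eqref{equivdef}: taking $\xi = 0$ there gives $\cd_F(z) \ge \frac{F(tz)-F(0)}{t}$ for every $t > 0$, and letting $t \to \infty$ the right-hand side converges to $F^\infty(z)$ since $F(0)/t \to 0$. This step uses neither quasiconvexity nor the rank of $z$.

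The key technical point, which I would isolate as the main step, is the identity
\[
\lim_{t\to\infty} \frac{F(\xi + tz)}{t} = F^\infty(z)
\qquad\text{for all } \xi \in \M,\ z \in \M\setminus\{0\}.
\]
To prove it I would write $\xi + tz = s\, u_t$ with $s = t\,|z + \xi/t|$ and $u_t = (z + \xi/t)/|z+\xi/t|$, so that $s \to \infty$ while $u_t$ runs in the unit sphere with $u_t \to z/|z|$. The definition of $F^\infty$ furnishes convergence of $F(su)/s$ to $F^\infty(u)$ uniformly in $u$ on the sphere, and $F^\infty$, being a uniform limit of continuous functions, is itself continuous there; combining the uniform convergence with this continuity yields $F(s u_t)/s \to F^\infty(z/|z|)$. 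Since $\frac{F(\xi+tz)}{t} = \frac{F(s u_t)}{s}\,|z+\xi/t|$ and $|z+\xi/t| \to |z|$, positive $1$-homogeneity of $F^\infty$ gives the claim. This interchange of two limits — the recession limit, taken along rays through the origin, against rays through an arbitrary $\xi$ — is exactly where the uniformity built into the definition of $F^\infty$ is indispensable, and I expect it to be the main obstacle.

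For the rank-one equality I would invoke that quasiconvexity implies rank-one convexity, so that for $z$ of rank one the scalar function $t \mapsto F(\xi + tz)$ is convex for each fixed $\xi$. Convexity makes the difference quotient $t \mapsto \frac{F(\xi+tz)-F(\xi)}{t}$ nondecreasing in $t>0$, whence its supremum over $t>0$ equals its limit as $t\to\infty$, which by the identity above is $F^\infty(z)$, a value independent of $\xi$. Taking the supremum over both $\xi$ and $t$ in \eqref{equivdef} therefore yields $\cd_F(z) = F^\infty(z)$. Finally, when $F$ is convex the map $t \mapsto F(\xi+tz)$ is convex in every direction $z$, so the identical argument applies verbatim to all $z \in \M$ (the case $z=0$ being trivial), giving $\cd_F = F^\infty$ throughout and completing the proof.
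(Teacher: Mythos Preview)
Your proof is correct and follows the same route as the paper's: both obtain $F^\infty \le \cd_F$ by setting $\xi=0$ in \eqref{equivdef}, and both get the reverse inequality on rank-one directions (and on all directions when $F$ is convex) by using convexity of $t\mapsto F(\xi+tz)$ so that the supremum over $t>0$ of the difference quotients equals its limit, which is $F^\infty(z)$ independently of $\xi$. The only difference is in how this limit identity $\lim_{t\to\infty}\tfrac{F(\xi+tz)}{t}=F^\infty(z)$ is justified: you argue via the uniform-in-direction convergence built into the definition of $F^\infty$, whereas the paper opens by recording that $F$ is globally Lipschitz (rank-one convex of linear growth), which makes $\tfrac{F(\xi+tz)-F(tz)}{t}\to 0$ immediate.
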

	
	\begin{proof}
		First we recall that quasiconvexity implies rank-one convexity \cite{Morrey} and that rank-one convex integrands of at most linear growth
		are globally Lipschitz continuous \cite{MorreyB}. In view of \eqref{equivdef},
		$$
		F^{\infty}(z) = \lim_{t \to \infty}\frac{F(tz)-F(0)}{t} \leq \cd_{F}(z)
		$$
		holds for all $z \in \M$. Next fix $z$ of rank one and note that for each fixed $\xi \in \M$ the univariate function $t \mapsto F(tz+ \xi)$
		is convex, hence for $s>0$,
		$$
		F^{\infty}(z) = \sup_{t>0} \frac{F(tz + \xi)-F(\xi )}{t} \geq \frac{F(sz+\xi )-F( \xi )}{s}
		$$
		and so taking supremum over $s>0$, $\xi \in \M$, yields by \eqref{equivdef} that $F^{\infty}(z) \geq \cd_{F}(z)$. The last argument
		obviously also yields the last equality, $\cd_F = F^\infty$, when $F$ is convex.
	\end{proof}
	
	We write $\lambda=\lambda^a\mathscr{L}^n\mres\Omega+\lambda^s$ for the Radon--Nikodym decomposition of $\lambda$. 
	
	\begin{proof}[Proof of Theorem \ref{thm}]
		It was proved in \cite{KrRi} that for some $\mathscr{L}^n$ negligible set $N^a \subset \Omega$ we have
		$$
		F(\bar\nu_x+\lambda^a(x)\bar\nu_x^\infty)\leq \int_{\M} \! F\dif\nu_x+\lambda^a(x)\int_{\{|z|=1\}} \! F^\infty \dif\nu_x^\infty
		\quad\text{for } x\in\Omega \setminus N^a 
		$$
		for all quasiconvex integrands $F$ that admit a recession integrand $F^\infty$.
		Suppose we are at a point of \emph{diffuse concentration}, i.e., where $t \equiv \lambda^a(x)>0$. Consider for $\varepsilon > 0$ and
		$\xi \in \M$ the integrand
		$$
		f_{\varepsilon}(z)=\dfrac{F(\xi+\varepsilon z )-F(\xi)}{\varepsilon},
		$$
		which is itself quasiconvex and $f_\varepsilon^\infty=F^\infty$. If $\xi$ is a point of differentiability of $F$, then 
		$$
		\lim_{\varepsilon\searrow 0} f_\varepsilon(z)= F'(\xi) \bullet z \quad \mbox{ locally uniformly in } z.
		$$
		Plugging  $f_\varepsilon$ in the inequality above and passing to the limit $\varepsilon \searrow 0$, we obtain
		\begin{align*}
			F'(\xi) \bullet \bigl( \bar\nu_x+t\bar\nu_x^\infty\bigr) &\leq \int_{\M}\! F'(\xi)\bullet z \dif\nu_x(z)+t\int_{\{|z|=1\}}F^\infty \dif\nu_x^\infty\\
			&= F'(\xi)\bullet \bar\nu_x +t\int_{\{|z|=1\}} \! F^\infty \dif\nu_x^\infty,
		\end{align*}
		which is rearranged as $F'(\xi) \bullet \bar\nu_x^\infty \leq \langle F^\infty,\nu_x^\infty\rangle_{\mathcal M,\hold}$.
		Taking the supremum in $\xi$ on the left hand side, we arrive at the ``Jensen'' inequality
		\begin{align}\label{inequ}
			\cd_F(\bar\nu_x^\infty)\leq \int_{\{|z|=1\}} \! F^\infty \dif\nu_x^\infty\quad\text{for } x\in\Omega \setminus N^a \, \mbox{ with } \lambda^{a}(x) > 0.
		\end{align}
		For $x \in \Omega \setminus N^a$ with $\lambda^{a}(x) > 0$ we take $z=\bar{\nu}_{x}$, $\xi = \lambda^{a}(x)\bar{\nu}_{x}^{\infty}$ in \eqref{equivdef}
		to get for $F$ as above,
		$$
		F( \bar{\nu}_{x}+\lambda^{a}(x)\bar{\nu}_{x}^{\infty} ) \leq F( \bar{\nu}_{x}) + \cd_{F}( \bar{\nu}_{x}^{\infty})\lambda^{a}(x),
		$$
		and in combination with \eqref{inequ} we get
		\begin{align}\label{claim}
			F\bigl( \bar{\nu}_{x}+\lambda^{a}(x)\bar{\nu}_{x}^{\infty} \bigr) \leq F( \bar{\nu}_{x})+ \lambda^{a}(x) \int_{\{|z|=1\}} \! F^\infty \dif\nu_{x}^{\infty}.
		\end{align}
		Clearly this inequality extends to all $x \in \Omega \setminus N^a$ by declaring that $0$ times undefined is $0$.
		Hence we have obtained Jensen inequalities for all quasiconvex $F$ for which the recession integrand $F^\infty$ exists
		against the Young measure $\bigl( \delta_{\bar{\nu}_{x}},\lambda^a\mathscr L^n, \nu_x^\infty \bigr)$.
		We can then employ the construction in  \cite[Lem.~3.12]{KrRa} with $\mathscr A=\curl$ and $( \phi_j )$ a smooth mollifier
		to find maps $a_j \in \hold^{\infty}_{c}( \Omega , \RN )$ such that $a_j \wstar 0$ in $\bv ( \Omega , \RN )$ and
		$$
		\phi_j \ast (\bar{\nu}_{\cdot}+\lambda^a\bar{\nu}_{\cdot}^{\infty})+Da_j\text{ generates } \bigl( \delta_{\bar{\nu}_x},\lambda^a\mathscr L^n, \nu_{x}^{\infty} \bigr)
		$$
		On the other hand, we can use \cite[Lem.~3.11]{KrRa} with $\mathscr A=\curl$ to show that there also exists a sequence
		$b_j \in\hold^{\infty}_{c}(\Omega,\RN )$ with $b_j \wstar 0$ in $\bv ( \Omega , \RN )$ such that
		$$
		\phi_j \ast (\lambda^s\bar{\nu}_{\cdot}^{\infty})+Db_j\text{ generates } \bigl( \delta_0,\lambda^s, \nu_{x}^{\infty} \bigr) .
		$$
		Finally, \cite[Lem.~3.13]{KrRa} enables us to add the two generating sequences above and obtain the result by setting $v_j=a_j+b_j$.
	\end{proof}
	\begin{remark}
		If we combine Lemma \ref{auxiliary} with \eqref{inequ} we obtain an actual Jensen type inequality for the concentration-angle measure $\nu_{x}^\infty$
		at diffuse concentration points. Coupling this with Alberti's rank one theorem \cite{Alberti} and the main result of \cite{KiKr},
		we have that, for quasiconvex and positively $1$-homogeneous integrands $H$,
		$$
		H(\bar\nu_x^\infty)\leq \int_{\{|z|=1\}} \! H \dif\nu_x^\infty\quad\text{for }\lambda\text{-a.e. }x\in\Omega.
		$$
		This is another surprising consequence of our observation.
	\end{remark}
	
	\begin{example}\label{ex}
		Let $\mathrm{I}$ be the identity matrix in $\R^{2 \times 2}$ and put $\nu_{x} = \frac{1}{2}\bigl( \delta_{-\mathrm{I}}+\delta_{\mathrm{I}}\bigr)$
		for $x \in \Omega$, where $\Omega = B_{1}(0)$, the open unit disk in $\R^2$. Using \cite{Alberti0} we find a sequence $( u_j )$ of
		maps in $\hold^{\infty}_{c}( \Omega , \R^2 )$ such that $u_j \wstar 0$ in $\bv ( \Omega , \R^2 )$ and $(Du_{j})$ generates a Young
		measure $\nu = \bigl( \nu_x , \lambda , \nu_{x}^\infty \bigr)$. Let $F \colon \R^{2 \times 2} \to \R$ be a quasiconvex integrand
		that admits a recession integrand, a necessary assumption by \cite{Muller}. Then we have by general properties of Young measures that
		$\bar{\nu}_{x}\mathscr{L}^{2}\restrict \Omega +\bar{\nu}_{x}^{\infty}\lambda = 0$, hence $\bar{\nu}_{x}^{\infty} = 0$ for $\lambda$-a.e. $x$,
		and
		$$
		F(0) \leq \frac{F(\mathrm{I})+F(-\mathrm{I})}{2} + \lambda^{a}(x)\int_{\{ |z|=1 \}} \! F^{\infty} \, \dif \nu_{x}^{\infty} \quad
		\mbox{ for } \mathscr{L}^2 \mbox{-a.e. } x
		$$
		The Young measure obtained by switching off the unnecessary concentration in $\nu$ is $\nu^{o} \equiv \bigl( \nu_x , 0, \mbox{n/a} \bigr)$
		and since it is well-known that Jensen's inequality fails for the probability measure $\nu_x$ and some quasiconvex integrands $F$
		as above the Young measure $\nu^o$ cannot be generated by gradients.
	\end{example}

\end{document}